\newtheorem{theorem}{Theorem}[section]
\newtheorem{corollary}[theorem]{Corollary}
\newtheorem{lemma}[theorem]{Lemma}
\newtheorem{proposition}[theorem]{Proposition}
 \theoremstyle{definition}
 \newtheorem{definition}[theorem]{Definition}
 \theoremstyle{remark}
 \newtheorem{remark}[theorem]{Remark}
 \newtheorem*{example}{Example}
 \numberwithin{equation}{section}
\begin{document}

\title[Regularity of differential operators of constant strength]
{Smoothness of solutions of differential equations of constant
strength in Roumieu spaces}
\author[R. CHAILI]{Rachid CHAILI}
\address{Department of Mathematics \\
University of Sciences and Technology of Oran\\
Algeria}
\email{rachidchaili@gmail.com}

\author[T. MAHROUZ]{Tayeb MAHROUZ}
\address{Department of Mathematics \\
University of Ibn Khaldoun Tiaret\\
Algeria}
\email{mahrouz78@gmail.com}

\thanks{The authors are supported by Laboratory of mathematical analysis and
applications, Univ-Oran1.}

\subjclass[2010]{Primary 35B65, 35H10; Secondary 46E10}

\keywords{Differential operators of constant strength,
hypoelliptic operators, iterates of operators, Roumieu spaces}

\date{}

\begin{abstract}
We show in this work that every solution of hypoelliptic differential
equations with constant strength with coefficients in Roumieu spaces is in
some Roumieu space.
\end{abstract}

\maketitle

\section{Introduction}

A general class of hypoelliptic differential operators with variable
coefficients is the class of differential operators of constant strength. H
\"{o}rmander in \cite{H2}, has given a basic result of hypoellipticity of
these operators. Using this result and the theorem of Newberger-Zielezny 
\cite{Z1} concerning the problem of iterates of differential operators with
constant coefficients, Shafii-Mousavi and Zielezny \cite{Z.M} have obtained
a result of Gevrey regularity of solutions of linear differential equations
of constant strength with coefficients in Gevrey classes. In the case of
differential operators with constant coefficients, various results of Gevrey
and Roumieu hypoellipticity have obtained with help of iterates of operators
or more generally of iterates of systems of operators, see in this sense the
works of \cite{BCh01}, \cite{JH} and \cite{Z1}.

Our purpose in this paper is to give an extension of the result of H\"{o}
rmander \cite{H2} in the classes of ultradifferentiable functions (or
classes of Roumieu) to linear differential equations of constant strength
with coefficients in Roumieu spaces.

For the definition of Roumieu spaces we will consider sequences of positive
real numbers $\left( M_{p}\right) $ satisfying the following conditions:
\newline
logarithmic convexity : 
\begin{equation}
M_{0}=1\text{ and }M_{p}^{2}\leq M_{p-1}M_{p+1},\text{ }\forall p\in \mathbb{N}
^{\ast },  \label{H1}
\end{equation}
stability under derivation and multiplication: 
\begin{equation}
\exists H>0:C_{p}^{j}M_{p-j}M_{j}\leq M_{p}\leq H^{p}M_{p-j}M_{j},\text{ }
\forall p\in \mathbb{N},\text{ }j\leq p.  \label{H3}
\end{equation}

\begin{example}
The sequence $M_{p}=p!^{s},$ $s\geq 1,$ called Gevrey sequence of order $s,$
satisfies the conditions $\left( \ref{H1}\right) -\left( \ref{H3}\right) $.
\end{example}

For more details on the spaces of ultra-differentiable functions and ultra-distributions see \cite{Km}.

\begin{definition}
Let $\Omega $ be an open subset of $\mathbb{R}^{n},$ $Q\left( D\right) =\sum_{\left\vert \alpha \right\vert \leq
m}a_{\alpha }D^{\alpha }$ a linear differential operator with constant
coefficients of order $m$. We call Roumieu vector (or vector of type $M_{p}$
) of the operator $Q\left( D\right) $ in $\Omega ,$ any function $u\in
C^{\infty }\left( \Omega \right) $ such that 
\begin{equation*}
\forall H\text{ compact of }\Omega ,\exists C>0,\forall l\in \mathbb{Z}
_{+}:\left\Vert Q^{l}\left( D\right) u\right\Vert _{L^{2}\left( H\right)
}\leq C^{l+1}M_{lm}
\end{equation*}
The space of Roumieu vectors of $Q\left( D\right) $ in $\Omega $ is denoted $
R_{M}\left( \Omega ,Q\right) .$
\end{definition}

\begin{definition}
We call Roumieu space in $\Omega ,$ and we denote $R_{M}\left( \Omega
\right) $, the space of functions $u\in C^{\infty }\left( \Omega \right) $
such that 
\begin{equation*}
\forall H\text{ compact }\Omega ,\exists C>0,\forall \alpha \in \mathbb{Z}
_{+}^{n}:\left\Vert D^{\alpha }u\right\Vert _{L^{2}\left( H\right) }\leq
C^{\left\vert \alpha \right\vert +1}M_{\left\vert \alpha \right\vert }
\end{equation*}
\end{definition}

\begin{example}
If $M_{p}=p!^{s},$ $s\geq 1,$ then $R_{M}\left( \Omega \right) $ is the
Gevrey space of order $s$ in $\Omega ,$ it is denoted $G^{s}\left( \Omega
\right) .$

Similarly $R_{M}\left( \Omega ,Q\right) $ is denoted $G^{s}\left( \Omega
,Q\right) .$
\end{example}

\begin{definition}
A differential operator with variable coefficients $Q\left( x,D\right) $ is
said hypoelliptic if for every open subset $\Omega $ $\subseteq $ $
\mathbb{R}^{n}$ and any distribution $u\in \mathcal{D}^{\prime }\left( \Omega
\right) ,$ we have 
\begin{equation*}
Qu\in C^{\infty }\left( \Omega \right) \Longrightarrow u\in C^{\infty }\left( \Omega \right) .
\end{equation*}
\end{definition}

In the case of differential operators with constant coefficients the
hypoellipticity of these operators can be characterized with help of their
symbols and various characterizations where obtained, see for instance \cite%
{T}. We recall here one of these characterizations.

\begin{proposition}
\label{prop1.1}The operator $Q\left( D\right) $ is hypoelliptic if and only
if 
\begin{equation}
\exists C>0,\exists d\geq 1,\forall \beta \in \mathbb{Z}_{+}^{n},\forall \xi \in 
\mathbb{R}^{n}:\left\vert \xi \right\vert ^{\frac{\left\vert \beta \right\vert }{d}
}\left\vert Q^{\left( \beta \right) }\left( \xi \right) \right\vert \leq
C\left( 1+\left\vert Q\left( \xi \right) \right\vert \right) \text{ },
\label{1.1}
\end{equation}
where $Q^{\left( \beta \right) }\left( \xi \right) =\partial _{\xi }^{\beta
}Q\left( \xi \right) .$
\end{proposition}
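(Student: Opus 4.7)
The plan is to prove the two implications separately, following Hörmander's classical approach. The sufficiency direction is Fourier-analytic and constructive via a parametrix, while the necessity direction is more delicate and requires an algebraic input to pass from qualitative to quantitative decay.

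For sufficiency, assume the estimate (1.1). I would build a parametrix for $Q(D)$. Choose $\chi \in C^\infty(\mathbb{R}^n)$ that vanishes on a neighborhood of the set where $|Q(\xi)| \leq 1$ and equals $1$ outside a slightly larger set, and define $E = \mathcal{F}^{-1}(\chi/Q)$ as a tempered distribution. Iterating (1.1) and applying the quotient/Leibniz rule give estimates of the form
\[
\bigl|\partial^\alpha_\xi(1/Q(\xi))\bigr| \leq C_\alpha (1+|\xi|)^{-|\alpha|/d}\bigl(1+|Q(\xi)|\bigr)^{-1}
\]
outside a compact set. Multiplying $E$ by $x^\gamma$ and computing in Fourier corresponds to differentiating $\chi/Q$; the above estimates show $x^\gamma E \in C^k(\mathbb{R}^n)$ whenever $|\gamma|$ is large enough compared to $k$, so $E$ is $C^\infty$ on $\mathbb{R}^n\setminus\{0\}$. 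Since $Q(D)E - \delta \in C^\infty$, a standard cutoff-and-convolve argument converts $Q(D)u \in C^\infty$ into $u \in C^\infty$ locally, which is hypoellipticity.

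For necessity, assume $Q(D)$ is hypoelliptic. I would first derive the qualitative statement $Q^{(\beta)}(\xi)/Q(\xi) \to 0$ as $|\xi|\to\infty$ through points where $Q(\xi)\neq 0$, for every $\beta\neq 0$. The standard route is to study the complex zeros $\eta$ of $\xi \mapsto Q(\xi+\eta)$: hypoellipticity forces the distance from $\mathbb{R}^n$ to the variety $\{Q=0\}$ to grow with $|\xi|$, and a Taylor expansion around $\xi$ converts this into the ratio bound. To upgrade this qualitative decay to the polynomial rate in (1.1), I would invoke the Tarski–Seidenberg theorem: the function
\[
F(t) = \sup_{|\xi|\geq t} \frac{|Q^{(\beta)}(\xi)|}{1+|Q(\xi)|}
\]
is semi-algebraic in $t$, and a semi-algebraic function of one variable that tends to $0$ at infinity does so at a polynomial rate. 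Rearranging the resulting bound $F(t) \leq C t^{-1/d}$ and absorbing the behavior on bounded sets yields (1.1).

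The main obstacle is this final step in the necessity direction: the passage from the qualitative limit $|Q^{(\beta)}(\xi)/Q(\xi)|\to 0$ to a uniform polynomial rate of decay. Without the Tarski–Seidenberg principle (or an equivalent algebraic lemma on polynomial inequalities), there is no way to extract a single exponent $d$ that works for all $\beta$ simultaneously. The sufficiency direction is, by contrast, essentially routine once the parametrix $\mathcal{F}^{-1}(\chi/Q)$ is in hand, since (1.1) provides precisely the symbol-type estimates needed to control it away from the origin.
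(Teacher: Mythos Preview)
The paper does not prove this proposition: it is stated as a known characterization and attributed to the literature (Tr\`eves \cite{T}, with the remark following it citing the same source for the existence of a smallest rational $d$). So there is no ``paper's own proof'' to compare against; the authors treat (\ref{1.1}) as background.

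Your sketch is a faithful outline of the classical argument (essentially H\"ormander's). The sufficiency direction via a parametrix $E=\mathcal{F}^{-1}(\chi/Q)$ and symbol estimates on $\partial^\alpha(1/Q)$ is standard and correct as stated. For necessity, your plan---first a qualitative statement that the distance from real $\xi$ to the complex zero set of $Q$ tends to infinity (equivalently $Q^{(\beta)}/Q\to 0$), then a quantitative upgrade via Tarski--Seidenberg---is exactly the route taken in H\"ormander's and Tr\`eves' treatments. One small point: in the necessity step you should also make explicit why the \emph{same} exponent $d$ can be chosen for all $\beta$; this follows either because there are only finitely many nonzero $Q^{(\beta)}$, or (more structurally) because the semi-algebraic argument applied to $\widetilde{Q}(\xi)/|Q(\xi)|$ already controls all derivatives at once. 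With that caveat, your plan is sound and matches what the cited references do; it simply supplies the proof the paper chose to omit.
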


\begin{remark}
In fact, from \cite{T} if $Q\left( D\right) $ is hypoelliptic, then there
exists a smallest number $d$, which is rational satisfying $\left( \ref{1.1}
\right) .$ In the sequel we will say that $Q\left( D\right) $ is $d-$
hypoelliptic.
\end{remark}

\section{Preliminaries}

For any open subset $\omega $ of $\mathbb{R}^{n}$ and $\delta >0$ we set 
\begin{equation*}
\omega _{\delta }=\left\{ x\in \omega ,\text{ }d\left( x,C\omega \right)
>\delta \right\} .
\end{equation*}

If $f\in L_{loc}^{2}\left( \omega \right) ,$ $\mu >0$ and $t>0,$ we define 
\begin{equation*}
N_{\omega ,\mu ,t}\left( f\right) =\underset{0<\delta \leq t}{\sup }\delta
^{\mu }\left\Vert f\right\Vert _{L^{2}\left( \omega _{\delta }\right) }.
\end{equation*}

Without loosing of generality, we suppose in which follows that $\omega $ is
a bounded open subset of diameter $<1,$ and for simplify we denote $
N_{\omega ,\mu ,t}\left( f\right) $ by $N_{\mu }\left( f\right) $ if there
is not confusion.

The following proposition is the origin of the fundamental estimate of
hypoelliptic differential operators with constant coefficients, see H\"{o}rmander \cite[Theorem \ 4.2]{H1}.

\begin{proposition}
\label{P.1}Let $\omega $ be a bounded open subset of $\mathbb{R}^{n},$ $Q\left( D\right) $ a $d-$hypoelliptic differential operator of order 
$m,$ and $R\left( D\right) $ a differential operator such that 
\begin{equation*}
\left\vert R\left( \xi \right) \right\vert \leq C\left( 1+\left\vert Q\left(
\xi \right) \right\vert \right) \text{ },\ \forall \xi \in \mathbb{R}^{n}
\end{equation*}
for some $C>0,$ then there exists $C^{\prime }>0$ such that 
\begin{equation*}
\sum_{\alpha }N_{dm}\left( R\left( D\right) u\right) \leq C^{\prime }\left(
N_{dm}\left( Q\left( D\right) u\right) +\left\Vert u\right\Vert
_{L^{2}\left( \omega \right) }\right) ,\text{ }\forall u\in C^{\infty
}\left( \omega \right) .
\end{equation*}
\end{proposition}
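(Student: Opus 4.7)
The plan is to localize $u$ via a cutoff supported in a slightly larger set than $\omega_\delta$, use Parseval's theorem to pass to the Fourier side, and exploit the hypoellipticity of $Q(D)$ (Proposition 1.1) to bound the derivatives $R^{(\alpha)}(\xi)$ in terms of $1 + |Q(\xi)|$.

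First, I would fix $\delta \in (0,t]$ and construct a cutoff $\chi_\delta \in C_c^\infty(\omega_{\delta/2})$ with $\chi_\delta \equiv 1$ on $\omega_\delta$ and $|D^\beta \chi_\delta| \leq C_\beta \delta^{-|\beta|}$. Since $\chi_\delta u \in C_c^\infty(\omega)$ and $R^{(\alpha)}(D)u = R^{(\alpha)}(D)(\chi_\delta u)$ on $\omega_\delta$, Parseval gives
\[
\|R^{(\alpha)}(D) u\|_{L^2(\omega_\delta)} \leq \|R^{(\alpha)}(D)(\chi_\delta u)\|_{L^2(\mathbb{R}^n)} = \|R^{(\alpha)}(\xi)\,\widehat{\chi_\delta u}(\xi)\|_{L^2}.
\]

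Next, I would establish the Fourier-side pointwise bound
\[
\sum_\alpha |R^{(\alpha)}(\xi)|^2 \leq C\,(1+|Q(\xi)|)^2, \qquad \xi \in \mathbb{R}^n.
\]
This follows from the hypothesis $|R(\xi)| \leq C(1+|Q(\xi)|)$ combined with Proposition~\ref{prop1.1}: writing Taylor expansions of $R$ around $\xi$ and using the hypoelliptic bound $|\xi|^{|\beta|/d}|Q^{(\beta)}(\xi)| \leq C(1+|Q(\xi)|)$, one inductively controls $R^{(\alpha)}$ by $(1+|\xi|)^{-|\alpha|/d}(1+|Q(\xi)|)$, and the sum over $\alpha$ converges. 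Substituting into the Parseval identity and applying Parseval again on the right yields
\[
\sum_\alpha \|R^{(\alpha)}(D)u\|_{L^2(\omega_\delta)} \leq C\bigl(\|\chi_\delta u\|_{L^2} + \|Q(D)(\chi_\delta u)\|_{L^2}\bigr).
\]

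To conclude, I would expand $Q(D)(\chi_\delta u) = \chi_\delta Q(D)u + [Q(D),\chi_\delta]u$. The commutator is a differential operator of order at most $m-1$ with coefficients bounded by $C\delta^{-m+|\gamma|}$ for the $\gamma$-th order term, supported in $\omega_{\delta/2}\setminus\omega_\delta$. Multiplying the whole estimate by $\delta^{dm}$ and taking the supremum over $\delta$ in $(0,t]$, the good term $\delta^{dm}\|\chi_\delta Q(D)u\|_{L^2}$ is absorbed into $N_{dm}(Q(D)u)$, while the commutator contribution is estimated, after a change of variable $\delta \to \delta/2$, by lower-order $\delta^{dm-m+|\gamma|}\|D^\gamma u\|_{L^2(\omega_{\delta/2})}$ terms.

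The main obstacle is closing this commutator estimate: these lower-order norms must be interpolated between $\|u\|_{L^2(\omega)}$ and $\sum_\alpha N_{dm}(R^{(\alpha)}(D)u)$ (with $R = Q$ one sees all derivatives of order $\leq m$ appear on the left), so a Sobolev-type interpolation inequality $\|D^\gamma v\|_{L^2} \leq \epsilon \|Q(D)v\|_{L^2} + C_\epsilon \|v\|_{L^2}$, applied locally to $v = \chi_{\delta/2}u$ with $\epsilon$ small, allows a fraction of the left-hand side to be absorbed, leaving the desired inequality with a modified constant $C'$.
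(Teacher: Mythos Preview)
The paper does not prove this proposition; it is simply quoted from H\"ormander \cite[Theorem~4.2]{H1} as a known result, so there is no in-paper argument to compare against.

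Your overall strategy---localize by a cutoff $\chi_\delta$, pass to the Fourier side via Parseval, use the symbol bound to control $R^{(\alpha)}(\xi)$ by $1+|Q(\xi)|$, and then handle the commutator $[Q(D),\chi_\delta]$---is the standard one and is essentially H\"ormander's. The first three steps are correctly sketched (the bound $\sum_\alpha|R^{(\alpha)}(\xi)|^2\le C(1+|Q(\xi)|)^2$ already follows from a polynomial Cauchy--Taylor inequality together with $|Q(\xi+\eta)|\le C(1+|Q(\xi)|)$ for $|\eta|\le 1$; the extra decay in $|\xi|$ you mention is not needed for this finite sum).

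The gap is in the absorption step. First, the interpolation inequality you invoke, $\|D^\gamma v\|\le \varepsilon\|Q(D)v\|+C_\varepsilon\|v\|$ for all $|\gamma|\le m-1$, is \emph{false} for general hypoelliptic $Q$: for the heat operator $Q=\partial_t-\Delta_x$ one cannot bound $\|\partial_t v\|$ this way. What Leibniz actually places in the commutator is $Q^{(\beta)}(D)u$ with $|\beta|\ge 1$, and for \emph{those} operators the interpolation does hold; so this point is fixable. Second, and more seriously, even with the correct commutator terms the bookkeeping does not close. After multiplying by $\delta^{dm}$ the commutator contributes $\sum_{|\beta|\ge 1}\delta^{dm-|\beta|}\|Q^{(\beta)}(D)u\|_{L^2(\omega_{\delta/2})}$, which carries a factor $\delta^{-|\beta|}$ relative to $N_{dm}(Q^{(\beta)}(D)u)$. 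A $\delta$-independent interpolation constant $C_\varepsilon$ cannot neutralize this as $\delta\to 0$, and inserting a further cutoff $\chi_{\delta/2}$ reproduces the same divergent factor at the next scale, so the iteration does not contract. Closing this loop is exactly the technical heart of H\"ormander's proof in \cite{H1}; it requires exploiting the full quantitative decay $|\xi|^{|\beta|/d}|Q^{(\beta)}(\xi)|\le C(1+|Q(\xi)|)$ in conjunction with the small parameter $t$ in $N_{\omega,\mu,t}$, rather than a generic interpolation inequality. You have correctly identified the obstacle but not resolved it.
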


From this proposition Newberger and Zielezny in their paper \cite{Z1} have
obtained a result of Gevrey regularity of hypoelliptic differential
operators with constant coefficients. Further the authors in \cite{BCh01}
and \cite{CH.M} have considered a large class of differential operators with
constant coefficients including the class of hypoelliptic differential
operators, and have shown the inclusion between the spaces of Gevrey vectors
in \cite{BCh01} and between the spaces of Roumieu vectors in \cite{CH.M}.

In order to establish the result of Roumieu regularity of solutions of
linear differential equations of constant strength with coefficients in
Roumieu spaces, we enumerate and prove some properties of sequences $\left(
M_{p}\right) $ satisfying the conditions $\left( \ref{H1}\right) -\left( \ref
{H3}\right) .$

\begin{proposition}
If the sequence $\left( M_{p}\right) $ satisfies the condition $\left( \ref
{H1}\right) $, then $\left( M_{p}\right) ^{\frac{1}{p}}$ is an increasing
sequence.
\end{proposition}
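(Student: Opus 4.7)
The plan is to reduce the monotonicity of $(M_p)^{1/p}$ to a statement about the ratios $a_k := M_k/M_{k-1}$. The logarithmic convexity assumption $M_p^2 \leq M_{p-1}M_{p+1}$ can be rewritten (since all $M_p > 0$) as
\[
\frac{M_p}{M_{p-1}} \leq \frac{M_{p+1}}{M_p},
\]
which says precisely that $a_p \leq a_{p+1}$, i.e.\ the sequence $(a_k)_{k\geq 1}$ is nondecreasing. Since $M_0 = 1$, a telescoping product yields $M_p = a_1 a_2 \cdots a_p$ for every $p \geq 1$.

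Next, I would observe that the desired inequality $M_p^{1/p} \leq M_{p+1}^{1/(p+1)}$ is, after raising both sides to the $p(p+1)$ power, equivalent to $M_p^{p+1} \leq M_{p+1}^p$. Substituting the telescoped expression transforms this into
\[
(a_1 a_2 \cdots a_p)^{p+1} \leq (a_1 a_2 \cdots a_p a_{p+1})^p,
\]
which after cancelling the common factor $(a_1 \cdots a_p)^p$ on both sides reduces to
\[
a_1 a_2 \cdots a_p \leq a_{p+1}^{\,p}.
\]

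The final step is immediate from the monotonicity of $(a_k)$: each of the $p$ factors on the left is bounded by $a_{p+1}$, so their product is at most $a_{p+1}^p$. There is no real obstacle here; the only thing one must handle carefully is the base case ($p = 1$ gives $M_1 \leq M_1$, trivially true) and the convention $M_0 = 1$, which is explicitly part of hypothesis (\ref{H1}) and is what makes the telescoping product start cleanly at $a_1 = M_1$.
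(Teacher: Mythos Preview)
Your proof is correct and takes a genuinely different route from the paper's. The paper argues by induction directly on the logarithmic form of the target inequality, $\frac{p+1}{p}\log M_p \leq \log M_{p+1}$: the base case $p=1$ is read off from $M_1^2 \leq M_0 M_2 = M_2$, and the inductive step combines $2\log M_{p+1} \leq \log M_p + \log M_{p+2}$ with the induction hypothesis to push the inequality forward. Your argument instead passes to the ratio sequence $a_k = M_k/M_{k-1}$, observes that log-convexity is exactly the monotonicity of $(a_k)$, and then reduces $M_p^{1/p} \leq M_{p+1}^{1/(p+1)}$ to the product estimate $a_1\cdots a_p \leq a_{p+1}^p$, which is immediate. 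Your approach is more conceptual and requires no induction; the paper's is a bare-hands induction that avoids introducing auxiliary sequences. Both are short and standard.

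One small wrinkle: your parenthetical about the ``base case ($p=1$ gives $M_1 \leq M_1$)'' is not quite right---at $p=1$ the reduced inequality is $a_1 \leq a_2$, not a tautology---but since your main argument is not inductive and already covers every $p\geq 1$ uniformly, no separate base case is needed at all, and this remark can simply be dropped.
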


\begin{proof}
The growing of $\left( M_{p}\right) ^{\frac{1}{p}}$ is equivalent to 
\begin{equation}
\frac{p+1}{p}\log M_{p}\leq \log M_{p+1}\text{, }\forall p\in \mathbb{N}^{\ast }  \label{4}
\end{equation}
We will prove $\left( \ref{4}\right) $ by induction on $p$. For $p=1$, the
relation $\left( \ref{4}\right) $ is obtained from $\left( \ref{H1}\right) $
by composing the $\log $ function. Suppose that the estimate $\left( \ref{4}
\right) $ takes place until $p$ and verify that it remains true for $p+1.$
From condition $\left( \ref{H1}\right) $ and the hypothesis of recurrence,
we have%
\begin{eqnarray*}
2\log M_{p+1} &\leq &\log M_{p}+\log M_{p+2} \\
&\leq &\frac{p}{p+1}\log M_{p+1}+\log M_{p+2},
\end{eqnarray*}
hence 
\begin{equation*}
2\log M_{p+1}-\frac{p}{p+1}\log M_{p+1}\leq \log M_{p+2},
\end{equation*}
i.e. 
\begin{equation*}
\frac{p+2}{p+1}\log M_{p+1}\leq \log M_{p+2},
\end{equation*}
which is the relation $\left( \ref{4}\right) $ at order $p+1.$
\end{proof}

\begin{proposition}
If the sequence $\left( M_{p}\right) $ satisfies the condition $\left( \ref
{H3}\right) $ we have 
\begin{equation}
\forall m\in \mathbb{Q},\text{ }\exists B\geq 0:M_{pm}\leq B^{p}\left(
M_{p}\right) ^{m},\text{ }\forall p\in \mathbb{N},\ pm\in \mathbb{N}  \label{H4}
\end{equation}
\end{proposition}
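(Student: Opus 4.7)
I would split the argument into three cases. The degenerate cases $m \leq 0$ are trivial: $m < 0$ combined with $p \in \mathbb{N}$ and $pm \in \mathbb{N}$ forces $p = 0$, and $m = 0$ reduces the inequality to $1 \leq B^{p}$, so any $B \geq 1$ works.

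For positive integer $m$, my plan is a direct induction on $m$. The upper half of $(\ref{H3})$ applied with $p$ replaced by $mp$ and $j = p$ yields
\[
M_{mp} \leq H^{mp}\, M_{(m-1)p}\, M_{p},
\]
and iterating produces a constant $B_{m}$ depending only on $m$ (essentially of the form $H^{m(m+1)/2}$) such that $M_{mp} \leq B_{m}^{p}\,(M_{p})^{m}$ for every $p \in \mathbb{N}$. Note that $H \geq 1$ comes for free by plugging $p=1$, $j=0$ into $(\ref{H3})$, so exponents of $H$ behave monotonically in the induction.

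For $m = a/b \in \mathbb{Q}_{>0}$ written in lowest terms, the requirement $pm \in \mathbb{N}$ forces $b \mid p$; I would write $p = bk$ (so $pm = ak$) and combine two ingredients. The integer case applied at $a$ gives the upper bound $M_{ak} \leq B_{a}^{k}\,(M_{k})^{a}$. The lower half of $(\ref{H3})$, after dropping the binomial coefficient $\geq 1$, reads $M_{p-j}M_{j} \leq M_{p}$; iterating this with $j = k$ produces $(M_{k})^{b} \leq M_{bk}$ by a simple induction on $b$. Raising to the positive real power $a/b$ yields $(M_{k})^{a} \leq (M_{bk})^{a/b} = (M_{p})^{m}$, whence
\[
M_{pm} = M_{ak} \leq B_{a}^{k}\,(M_{p})^{m} = \bigl(B_{a}^{1/b}\bigr)^{p}(M_{p})^{m},
\]
so $B = B_{a}^{1/b}$ (or any larger constant) does the job.

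I do not foresee any genuine obstacle: both inductions are routine applications of $(\ref{H3})$ in its two directions. The only point of care is reducing $m$ to lowest terms before turning $p$ into a multiple of $b$, so that the fractional power $a/b$ applied to the positive-real inequality $(M_{k})^{b} \leq M_{bk}$ is unambiguous; this is harmless since every $M_{\ell}$ is a positive real.
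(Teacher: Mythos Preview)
Your argument is correct and follows the same two-step skeleton as the paper: handle integer $m$ by induction using the right-hand inequality in $(\ref{H3})$, then reduce a general $m=a/b$ to the integer case via $p=bk$. The one genuine difference is in how you pass from $(M_k)^a$ to $(M_p)^m$. The paper invokes the preceding proposition (logarithmic convexity $(\ref{H1})$ $\Rightarrow$ $(M_p)^{1/p}$ increasing) to obtain $M_t\le (M_{st})^{1/s}$; you instead iterate the \emph{left} inequality in $(\ref{H3})$, namely $M_{p-j}M_j\le M_p$, to get $(M_k)^b\le M_{bk}$ directly. Your route has the mild advantage of staying strictly within the stated hypothesis $(\ref{H3})$, whereas the paper's proof tacitly borrows $(\ref{H1})$ from the previous proposition; in the context of the paper both conditions are standing assumptions, so the difference is cosmetic. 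Your observation that $H\ge 1$ (from $p=1$, $j=0$ in $(\ref{H3})$) is a nice bookkeeping remark but not essential to the induction as you set it up.
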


\begin{proof}
First let's prove $\left( \ref{H4}\right) $ by induction on $m$ if $m\in 
\mathbb{N}.$ For $m=2$ the estimate is fulfilled from the condition $\left( 
\ref{H3}\right) $ with $B=H^{2}.$ So suppose that $\left( \ref{H4}\right) $
is satisfied until the order $m$ and let's prove it at the order $m+1.$ We
have from the condition $\left( \ref{H3}\right) $ and the recurrence
hypothesis
\begin{eqnarray*}
M_{p\left( m+1\right) } &\leq &H^{p\left( m+1\right) }M_{p}M_{pm} \\
&\leq &H^{p\left( m+1\right) }M_{p}B^{p}\left( M_{p}\right) ^{m} \\
&\leq &\left( H^{\left( m+1\right) }B\right) ^{p}\left( M_{p}\right) ^{m+1},
\end{eqnarray*}
which proves that $\left( \ref{H4}\right) $ is satisfied at the order $m+1.$

If $m\in \mathbb{Q}$ set $m=\frac{r}{s}$ in an irreducible form, so if$\
pm\in \mathbb{N}$ then $p=st$ with $t\in \mathbb{N}.$ Applying the
inequality $\left( \ref{H4}\right) $ for $t$ and $r$ we obtain 
\begin{equation*}
M_{pm}=M_{tr}\leq B_{r}^{t}\left( M_{t}\right) ^{r}
\end{equation*}%
On the other hand the sequence $\left( M_{p}\right) ^{\frac{1}{p}}$ is
increasing, so 
\begin{equation*}
\forall h\leq p,\text{ } M_{h}\leq \left( M_{p}\right) ^{\frac{h}{p}}
\end{equation*}
In particular we get from the last inequality
\begin{equation*}
M_{pm}\leq B_{r}^{t}\left( M_{st}\right) ^{\frac{r}{s}}\leq B_{m}^{p}\left(
M_{p}\right) ^{m}
\end{equation*}
\end{proof}

\begin{definition}
We denote $M_{p}\subset N_{p}$ if 
\begin{equation*}
\exists L>0,\exists C>0:M_{p}\leq CL^{p}N_{p},\ \forall p\in \mathbb{N}
\end{equation*}
\end{definition}

\begin{example}
We have $p!\subset M_{p}$, for all sequence $M_{p}$ satisfying condition $%
\left( \ref{H3}\right) $.
\end{example}

\begin{remark}
In fact we have a more stronger estimate, see \cite{Km}, 
\begin{equation*}
\forall L>0,\exists C>0:p!\leq CL^{p}M_{p},\ \forall p\in \mathbb{N}
\end{equation*}
\end{remark}

\section{Roumieu regularity of hypoelliptic operators with constant
coefficient}

The first result of this paper is the Roumieu regularity of hypoelliptic
operators with constant coefficients. But first let us show the following estimate of higher order of hypoelliptic operators with constant coefficients.

For simplify the notations set $d=\frac{\mu }{\nu }$ and $\gamma =dm\mu $
where $\mu ,\nu \in \mathbb{N}^{\ast }.$

\begin{proposition}
\label{prop3.1}Let $\omega $ be a bounded openk subset of $\mathbb{R}^{n}$ and let $Q$ be a $d-$hypoelliptic operator of order $m,$ then

$\exists C>0,\forall k\geq 0,\forall \alpha \in \mathbb{Z}_{+}^{n},\left\vert \alpha \right\vert \leq km\nu ,\forall \delta >0,\forall
u\in C^{\infty }\left( \omega \right) ,$
\begin{equation}
\left\Vert D^{\alpha }u\right\Vert _{L^{2}\left( \omega _{\delta }\right)
}\leq C^{k}\sum_{i=0}^{k}\binom{k}{i}\left( \frac{k}{\delta }\right)
^{\left( k-i\right) dm}\left\Vert Q^{i}u\right\Vert _{L^{2}\left( \omega
\right) }  \label{3.1}
\end{equation}
\end{proposition}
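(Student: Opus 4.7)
The plan is to prove~(\ref{3.1}) by induction on $k$. The base case $k=0$ is immediate: $|\alpha|=0$ forces $D^{\alpha}u=u$, and the right-hand side reduces to $\|u\|_{L^{2}(\omega)}\geq\|u\|_{L^{2}(\omega_{\delta})}$. All the substantive content sits in the $k=1$ estimate; once that is secured, the induction runs by a standard domain-shrinking trick combined with Pascal's identity.

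For the base step $k=1$, I would extract from Proposition~\ref{P.1} applied with $R(D)=D^{\alpha}$ (for $|\alpha|\leq m\nu$) the pointwise-in-$\delta$ consequence
\begin{equation*}
\|D^{\alpha}u\|_{L^{2}(\omega_{\delta})}\leq C\bigl[\delta^{-dm}\|u\|_{L^{2}(\omega)}+\|Qu\|_{L^{2}(\omega)}\bigr],
\end{equation*}
obtained by taking the parameter $t$ in the definition of $N_{dm}$ equal to $\delta$ itself and bounding $\|Qu\|_{L^{2}(\omega_{\delta'})}$ by $\|Qu\|_{L^{2}(\omega)}$. The eligibility of $R(D)=D^{\alpha}$ in Proposition~\ref{P.1} for $|\alpha|$ up to $m\nu$ will rest on the $d$-hypoellipticity bound $|\xi|^{m/d}\leq C(1+|Q(\xi)|)$ raised to the $\mu$-th power, which gives $|\xi|^{m\nu}\leq C(1+|Q(\xi)|)^{\mu}$, if necessary after first passing to $Q^{\mu}$.

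For the inductive step, assume~(\ref{3.1}) at level $k$ and take $\alpha$ with $|\alpha|\leq(k+1)m\nu$. Decompose $\alpha=\beta+\gamma$ with $|\beta|\leq m\nu$ and $|\gamma|\leq km\nu$, and set $\delta_{0}=k\delta/(k+1)$ so that $\omega_{\delta}\subset(\omega_{\delta_{0}})_{\delta/(k+1)}$. Applying the base estimate to $v:=D^{\gamma}u$ on $\omega_{\delta_{0}}$ with shrinking parameter $\delta/(k+1)$ yields
\begin{equation*}
\|D^{\beta}v\|_{L^{2}(\omega_{\delta})}\leq C\Bigl[\bigl((k+1)/\delta\bigr)^{dm}\|v\|_{L^{2}(\omega_{\delta_{0}})}+\|Qv\|_{L^{2}(\omega_{\delta_{0}})}\Bigr].
\end{equation*}
Since $Q$ has constant coefficients, $Qv=D^{\gamma}Qu$, so the induction hypothesis at level $k$ (with $\delta$ replaced by $\delta_{0}$, noting that $k/\delta_{0}=(k+1)/\delta$) applied successively to $u$ and to $Qu$ bounds each piece by $C^{k}\sum_{i=0}^{k}\binom{k}{i}\bigl((k+1)/\delta\bigr)^{(k-i)dm}$ times $\|Q^{i}u\|_{L^{2}(\omega)}$ and $\|Q^{i+1}u\|_{L^{2}(\omega)}$ respectively. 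Substituting and reindexing the second sum by $j=i+1$ so that both sums are indexed by the power of $Q$ applied to $u$, the coefficients combine through Pascal's identity $\binom{k}{i}+\binom{k}{i-1}=\binom{k+1}{i}$ into exactly the right-hand side of~(\ref{3.1}) at level $k+1$.

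The principal obstacle I foresee is the base step. Proposition~\ref{P.1} applies only when $|R(\xi)|\leq C(1+|Q(\xi)|)$, which naive use of $d$-hypoellipticity supplies for monomials $\xi^{\alpha}$ only up to order $m/d=m\nu/\mu$, not the wider range $m\nu$ used in~(\ref{3.1}). Pushing the base estimate up to $|\alpha|\leq m\nu$ will force either an iterated application of Proposition~\ref{P.1} along powers of $Q$, or a reformulation in which one replaces $Q$ by $Q^{\mu}$ and tracks how the weight $dm$ transforms. Once that is in place, the remainder is a routine combinatorial induction, and the factor $(k/\delta)^{(k-i)dm}$ in~(\ref{3.1}) is recovered naturally from the shrinking choice $\delta_{0}=k\delta/(k+1)$.
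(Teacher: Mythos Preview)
Your plan is exactly the paper's approach: induction on $k$, the domain-shrinking choice $\rho=k\delta/(k+1)$, $t=\delta/(k+1)$, and Pascal's identity to recombine the two sums. The obstacle you flagged is real and is resolved precisely as you anticipated, by replacing $Q$ with $Q^{\mu}$ before invoking Proposition~\ref{P.1}. The one detail you left unspecified is how the weight transforms: since $Q^{\mu}$ is $d$-hypoelliptic of order $m\mu$, the exponent in Proposition~\ref{P.1} becomes $\gamma=d(m\mu)=dm\mu$, not $dm$. Thus the correct base inequality is
\[
\|D^{\alpha}v\|_{L^{2}(\omega_{\rho+t})}\leq C_{1}\bigl(\|Q^{\mu}v\|_{L^{2}(\omega_{\rho})}+t^{-\gamma}\|v\|_{L^{2}(\omega_{\rho})}\bigr),\qquad |\alpha|\leq m\nu,
\]
and carrying this through the induction produces $\|Q^{\mu i}u\|_{L^{2}(\omega)}$ and $(k/\delta)^{(k-i)\gamma}$ on the right-hand side rather than $\|Q^{i}u\|$ and $(k/\delta)^{(k-i)dm}$. (The paper's own proof exhibits exactly these quantities; the discrepancy with the displayed statement~(\ref{3.1}) appears to be a misprint in the paper.) With that adjustment your argument is complete and coincides with the paper's.
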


\begin{proof}
Since $Q$ is $d-$hypoelliptic, so choosing $\beta $\ in $\left( \ref{1.1}
\right) $\ such that $\left\vert \beta \right\vert =m$ and $Q^{\left( \beta
\right) }\left( D\right) =Cst\neq 0$ we get
\begin{equation*}
\left\vert \xi \right\vert ^{m\nu }\leq C\left( 1+\left\vert Q^{\mu }\left(
\xi \right) \right\vert \right) ,\ \forall \xi \in \mathbb{R}^{n}
\end{equation*}
for some $C>0,$ then for all $\alpha \in \mathbb{Z}_{+}^{n}$ with $\left\vert \alpha \right\vert \leq m\nu $ we have 
\begin{equation*}
\left\vert \xi ^{\alpha }\right\vert \leq \left\vert \xi \right\vert
^{\left\vert \alpha \right\vert }\leq C\left( 1+\left\vert Q^{\mu }\left(
\xi \right) \right\vert \right) ,\ \forall \xi \in \mathbb{R}^{n}
\end{equation*}
On the other hand we can check from Proposition \ref{prop1.1}\ that the
operator $Q^{\mu }\left( D\right) $ which is of order $m\mu $\ is also $d-$
hypoelliptic, hence from proposition \ref{P.1}, for any bounded open subset $
\omega $ of $\mathbb{R}^{n}$ there exists $C_{1}>0$ such that for all $\alpha \in \mathbb{Z}
_{+}^{n}$ with $\left\vert \alpha \right\vert \leq m\nu $ we have%
\begin{equation*}
N_{\gamma }\left( D^{\alpha }u\right) \leq C_{1}\left( N_{\gamma }\left(
Q^{\mu }u\right) +\left\Vert u\right\Vert _{L^{2}\left( \omega \right)
}\right) ,\text{ }\forall u\in C^{\infty }\left( \omega \right)
\end{equation*}
By definition of $N_{\gamma },$ we obtain

$\forall t>0,$ $\forall \rho \geq 0,$ $\forall \upsilon \in C^{\infty
}\left( \omega \right) ,$ $\forall \alpha \in \mathbb{Z}_{+}^{n},\ \left\vert \alpha \right\vert \leq m\nu :$
\begin{equation*}
\underset{0<\tau \leq t}{\sup \tau ^{\gamma }}\left\Vert D^{\alpha }\upsilon
\right\Vert _{L^{2}\left( \omega _{\rho +\tau }\right) }\leq C_{1}\left( 
\underset{0<\tau \leq t}{\sup \tau ^{\gamma }}\left\Vert Q^{\mu }\upsilon
\right\Vert _{L^{2}\left( \omega _{\rho +\tau }\right) }+\left\Vert \upsilon
\right\Vert _{L^{2}\left( \omega _{\rho }\right) }\right) ,
\end{equation*}
therefore
\begin{equation}
\left\Vert D^{\alpha }\upsilon \right\Vert _{L^{2}\left( \omega _{\rho
+t}\right) }\leq C_{1}\left( \left\Vert Q^{\mu }\upsilon \right\Vert
_{L^{2}\left( \omega _{\rho }\right) }+t^{-\gamma }\left\Vert \upsilon
\right\Vert _{L^{2}\left( \omega _{\rho }\right) }\right) ,\text{ }\forall
\upsilon \in C^{\infty }\left( \omega _{\rho }\right) .  \label{3.2}
\end{equation}
From this estimate we will show $\left( \ref{3.1}\right) $ by recurrence on $
k.$ For $k=0$ it is trivial and for $k=1$\ the estimate $\left( \ref{3.1}
\right) $ is obtained from $\left( \ref{3.2}\right) $ taking $\rho =0$\ and $
t=\delta .$ Suppose that $\left( \ref{3.1}\right) $\ takes place until the
order $k$ and let's prove that it remains true at the order $k+1.$ Let $
\alpha \in \mathbb{Z}_{+}^{n}$ with $km\nu <\left\vert \alpha \right\vert \leq \left( k+1\right)
m\nu ,$ writing $\alpha =\alpha ^{\prime }+\alpha _{0}$ with $\left\vert
\alpha _{0}\right\vert =m\nu $ and substituting in $\left( \ref{3.2}\right)
, $ for $t=\frac{\delta }{k+1}$, $\rho =\frac{k\delta }{k+1}$ and $\upsilon
=D^{\alpha ^{\prime }}u$ we obtain 
\begin{eqnarray*}
\left\Vert D^{\alpha }u\right\Vert _{L^{2}\left( \omega _{\delta }\right) }
&\leq &C_{1}\left( \left\Vert Q^{\mu }\left( D^{\alpha ^{\prime }}u\right)
\right\Vert _{\omega _{\rho }}+\left( \frac{k+1}{\delta }\right) ^{\gamma
}\left\Vert D^{\alpha ^{\prime }}u\right\Vert _{L^{2}\left( \omega _{\rho
}\right) }\right) \\
&\leq &C_{1}C_{1}^{k}\sum_{i=0}^{k}\binom{k}{i}\left( \frac{k+1}{\delta }
\right) ^{\left( k-i\right) \gamma }\left\Vert Q^{\mu i}Q^{\mu }u\right\Vert
_{L^{2}\left( \omega \right) } \\
&&+C_{1}C_{1}^{k}\sum_{i=0}^{k}\binom{k}{i}\left( \frac{k+1}{\delta }\right)
^{\left( k-i\right) \gamma }\left\Vert Q^{\mu i}u\right\Vert _{L^{2}\left(
\omega \right) } \\
&\leq &C_{1}^{k+1}\sum_{i=1}^{k+1}\binom{k}{i-1}\left( \frac{k+1}{\delta }
\right) ^{\left( k-i+1\right) \gamma }\left\Vert Q^{\mu i}u\right\Vert
_{L^{2}\left( \omega \right) } \\
&&+C_{1}^{k+1}\sum_{i=0}^{k}\binom{k}{i}\left( \frac{k+1}{\delta }\right)
^{\left( k-i\right) \gamma }\left\Vert Q^{\mu i}u\right\Vert _{L^{2}\left(
\omega \right) } \\
&\leq &C_{1}^{k+1}\sum_{i=0}^{k+1}\binom{k+1}{i}\left( \frac{k+1}{\delta }
\right) ^{\left( k+1-i\right) \gamma }\left\Vert Q^{\mu i}u\right\Vert
_{L^{2}\left( \omega \right) }.
\end{eqnarray*}
\end{proof}

\begin{theorem}
\label{Th.1}Let $\Omega $ be an open subset of $\mathbb{R}^{n}$ and $Q$ a $
d- $hypoelliptic differential operator with constant coefficients, and let $
\left( M_{p}\right) $ be a sequence satisfying $\left( \ref{H1}\right)
-\left( \ref{H3}\right) ,$ and such that 
\begin{equation}
\left( p!\right) ^{d}\subset M_{p},  \label{3.10}
\end{equation}
then 
\begin{equation*}
R_{M}\left( \Omega ,Q\right) \subset R_{M^{d}}\left( \Omega \right) .
\end{equation*}
\end{theorem}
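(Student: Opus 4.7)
The plan is to deduce the theorem from the higher-order estimate $\left( \ref{3.1}\right) $ by bounding, term by term, the resulting sum by a constant times $M_{\left\vert \alpha \right\vert }^{d}$.

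Fix $u\in R_{M}(\Omega ,Q)$ and a compact $H\subset \Omega $. Choose a bounded open $\omega $ of diameter $<1$ with $H\subset \omega $ and $\overline{\omega }\subset \Omega $, and set $\delta _{0}:=d(H,{}^{c}\omega )>0$. The hypothesis supplies $C_{0}>0$ with $\Vert Q^{i}u\Vert _{L^{2}(\omega )}\leq C_{0}^{i+1}M_{im}$ for all $i\geq 0$. Given a multi-index $\alpha $, I take the least $k\in \mathbb{Z}_{+}$ with $\left\vert \alpha \right\vert \leq km\nu $, so that $k\leq \left\vert \alpha \right\vert /(m\nu )+1$, and apply Proposition \ref{prop3.1} with $\delta =\delta _{0}$ to obtain
\begin{equation*}
\Vert D^{\alpha }u\Vert _{L^{2}(H)}\leq C_{1}^{k}\sum_{i=0}^{k}\binom{k}{i}\left( \tfrac{k}{\delta _{0}}\right) ^{(k-i)dm}C_{0}^{i+1}M_{im}.
\end{equation*}

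The crux is the pointwise bound $k^{(k-i)dm}M_{im}\leq C_{2}^{k}M_{km\nu }^{d}$ for $0\leq i\leq k$, which I would establish in three moves. First, condition $\left( \ref{H3}\right) $ applied with $p=km$, $j=im$ yields the splitting $M_{im}\leq M_{km}/M_{(k-i)m}$. Second, the hypothesis $\left( \ref{3.10}\right) $, written as $M_{(k-i)m}\geq ((k-i)m)!^{d}/(CL^{(k-i)m})$, together with the elementary identity $k^{n}/n!\leq e^{k}$ (valid for every $n\geq 0$), controls $k^{(k-i)dm}/M_{(k-i)m}$ by a constant to the $k$-th power; this reduces the task to bounding $M_{km}$ by $M_{km\nu }^{d}$. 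Third, iterating $M_{a}M_{b}\leq M_{a+b}$ (also from $\left( \ref{H3}\right) $) gives $M_{km}^{\mu }\leq M_{km\mu }$, while $\left( \ref{H4}\right) $ applied with $p=km\nu $ and rational exponent $d=\mu /\nu $ gives $M_{km\mu }\leq B^{km\nu }M_{km\nu }^{d}$; extracting a $\mu $-th root (using $M_{p}\geq 1$) closes the chain.

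Substituting this bound into the sum, estimating $\binom{k}{i}\leq 2^{k}$ and summing the geometric factors in $C_{0}^{i}$, I would obtain $\Vert D^{\alpha }u\Vert _{L^{2}(H)}\leq C_{3}^{k+1}M_{km\nu }^{d}$. Since $km\nu -\left\vert \alpha \right\vert <m\nu $ is bounded and $k\leq \left\vert \alpha \right\vert /(m\nu )+1$, a final application of $\left( \ref{H3}\right) $ transfers $M_{km\nu }$ to $M_{\left\vert \alpha \right\vert }$ at the cost of a factor geometric in $\left\vert \alpha \right\vert $, producing $\Vert D^{\alpha }u\Vert _{L^{2}(H)}\leq C^{\left\vert \alpha \right\vert +1}M_{\left\vert \alpha \right\vert }^{d}$ and hence $u\in R_{M^{d}}(\Omega )$. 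The main obstacle is precisely this key arithmetic estimate: the factor $k^{(k-i)dm}$ is large and must be redistributed via $M_{(k-i)m}M_{im}\leq M_{km}$ so that hypothesis $\left( \ref{3.10}\right) $ can cancel it through the correct subscript; naive bounds such as $k^{(k-i)dm}\leq k^{kdm}$ lose too much to recover a Roumieu estimate of order $M^{d}$.
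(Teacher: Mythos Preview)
Your proof is correct and follows the same overall architecture as the paper: invoke Proposition~\ref{prop3.1}, bound each summand of the resulting sum by a geometric constant times the target $M$-term, and finish with~\eqref{H3} and~\eqref{H4}. The difference lies only in the key arithmetic step. The paper estimates $k^{(k-i)\gamma}$ directly by a fractional power of $M_{(k+1)m\mu}$ (via $p^{p}\le p!\,e^{p}$ and~\eqref{3.10}) and then uses the monotonicity of $(M_{p})^{1/p}$ to write $M_{im\mu}\le M_{(k+1)m\mu}^{\,i/(k+1)}$, so that the two fractional powers recombine into $M_{(k+1)m\mu}$. You instead split multiplicatively via the left inequality in~\eqref{H3}, $M_{im}M_{(k-i)m}\le M_{km}$, and cancel $k^{(k-i)dm}$ against $M_{(k-i)m}$ through~\eqref{3.10} together with $k^{n}/n!\le e^{k}$; this route avoids fractional exponents until the final conversion $M_{km}\to M_{km\nu}^{d}$ and is arguably a bit more transparent. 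One caution: the inequality actually \emph{proved} in Proposition~\ref{prop3.1} (and the one the paper itself uses) carries $\Vert Q^{\mu i}u\Vert$ and exponent $\gamma=dm\mu$, not $\Vert Q^{i}u\Vert$ and $dm$ as displayed in~\eqref{3.1}; your method adapts verbatim upon replacing $m$ by $m\mu$ in the splitting step, after which $M_{km\mu}\le B^{km\nu}M_{km\nu}^{d}$ follows directly from~\eqref{H4} and the $\mu$-th-root detour becomes unnecessary.
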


\begin{proof}
Suppose that $u\in R_{M}\left( \Omega ,P\right) $, and let $H$ be a compact
subset of $\Omega ,$ then there exists a bounded open set $\omega $ and $
\delta >0$ such that $H\subset \omega _{\delta }\subset \omega \subset
\Omega .$ Therefore there exists $A>0$ such that 
\begin{equation*}
\left\Vert Q^{\mu i}u\right\Vert _{L^{2}\left( \omega \right) }\leq A^{\mu
i+1}M_{\mu im}\text{ },\text{ }i=0,1,...,
\end{equation*}
hence from $\left( \ref{3.10}\right) $ and since $p^{p}\leq p!e^{p},$ we get
for every $i\leq k+1,$
\begin{eqnarray}
k^{\left( k+1-i\right) \gamma }\left\Vert Q^{\mu i}u\right\Vert
_{L^{2}\left( \omega \right) } &\leq &\left( \left( k+1\right) m\mu \right)
!^{\frac{k+1-i}{k+1}d}e^{\left( k+1\right) m\mu }A^{\mu i+1}M_{i\mu m} 
\notag \\
&\leq &A_{1}^{k+1}M_{im\mu }\left( M_{\left( k+1\right) m\mu }\right) ^{
\frac{k+1-i}{k+1}},  \label{3.11}
\end{eqnarray}
for some constant $A_{1}>0.$ But the sequence $\left( M_{p}\right) ^{\frac{1}{p}}$ is increasing, so
\begin{equation*}
\forall h\leq p,\text{ }\left( M_{h}\right) ^{p}\leq \left( M_{p}\right) ^{h}
\end{equation*}
Applying for $h=im\mu $ and $p=\left( k+1\right) m\mu ,$ we obtain 
\begin{equation*}
M_{im\mu }\leq \left( M_{\left( k+1\right) m\mu }\right) ^{\frac{im\mu }{
\left( k+1\right) m\mu }}=\left( M_{\left( k+1\right) m\mu }\right) ^{\frac{i
}{\left( k+1\right) }},
\end{equation*}
which gives with $\left( \ref{3.11}\right) $ and $\left( \ref{H3}\right) $
\begin{eqnarray*}
k^{\left( k+1-i\right) \gamma }\left\Vert Q^{\mu i}u\right\Vert
_{L^{2}\left( \omega \right) } &\leq &A_{1}^{k+1}M_{\left( k+1\right) m\mu }
\\
&\leq &A_{1}^{k+1}H^{\left( k+1\right) m\mu }M_{km\mu }M_{m\mu } \\
&\leq &A_{2}^{k+1}M_{km\mu }.
\end{eqnarray*}
This estimate will permit us to conclude. Let $\alpha \in \mathbb{Z}
_{+}^{n}, $ so there exists an integer $k$ such that $\left( k-1\right) \nu
m\leq \left\vert \alpha \right\vert \leq k\nu m.$ Taking account of the
condition $\left( \ref{H3}\right) $, we obtain from $\left( \ref{3.1}\right) $ and the last estimate 
\begin{eqnarray*}
\left\Vert D^{\alpha }u\right\Vert _{L^{2}\left( H\right) } &\leq
&C^{k}\sum_{i=0}^{k+1}\binom{k}{i}\left( \frac{1}{\delta }\right) ^{\left(
k-i\right) \gamma }A_{2}^{k+1}M_{km\mu } \\
&\leq &C^{k}\left( \left( \frac{1}{\delta }\right) ^{\gamma }+1\right)
^{k}A_{2}^{k+1}M_{km\mu } \\
&\leq &A_{3}^{k+1}M_{km\mu }
\end{eqnarray*}
But $k\mu m=k\nu dm\leq \left\vert \alpha \right\vert d,$ so we get with the
inequality $\left( \ref{H4}\right) $ 
\begin{equation*}
\left\Vert D^{\alpha }u\right\Vert _{L^{2}\left( H\right) }\leq
A_{3}^{\left\vert \alpha \right\vert +1}M_{\left\vert \alpha \right\vert
}^{d},
\end{equation*}
which shows that $u\in R_{M^{d}}\left( \Omega \right) .$
\end{proof}

\section{Roumieu regularity of hypoelliptic operators with constant strength}

Before establish the regularity of solutions of differential equations
associated with differential operators of constant strength, we will recall
some definitions and properties related to this question and the fundamental
result due to H\"{o}rmander \cite{H2} of regularity of these operators.

\begin{definition}
A differential operator $P\left( x,D\right) $ with variable coefficients in $
\Omega $ is said of constant strength in $\Omega ,$ if for arbitrary fixed $
x,y\in \Omega $, the differential operators $P\left( x,D\right) $ and $
P\left( y,D\right) $ with constant coefficients are equally strong, that
means 
\begin{equation*}
\forall x,y\in \Omega ,\text{ }\exists C_{x,y}>0:\widetilde{P}\left( x,\xi
\right) \leq C_{x,y}\widetilde{P}\left( y,\xi \right) ,\text{ }\forall \xi
\in \mathbb{R}^{n},
\end{equation*}
where $\widetilde{P}^{2}\left( \xi \right) =\sum_{\alpha }\left\vert
P^{\left( \alpha \right) }\left( \xi \right) \right\vert ^{2}.$
\end{definition}

\begin{definition}
A positive function $h$ defined in $\mathbb{R}^{n}$ will be called a temperate weight function if 
\begin{equation}
\exists C,\exists N>0:h\left( \xi +\eta \right) \leq \left( 1+C\left\vert
\eta \right\vert \right) ^{N}h\left( \xi \right) ;\text{ }\forall \xi ,\eta
\in \mathbb{R}^{n}.  \label{3.13}
\end{equation}
The set of these functions will be denoted $\mathcal{H}.$
\end{definition}

\begin{example}
For every polynomial $P\left( \xi \right) $ the function $\widetilde{P}
\left( \xi \right) $ is in $\mathcal{H}$.
\end{example}

\begin{lemma}
\label{Lemma1}For all $h\in \mathcal{H}$ and all $\delta >0,$ the function 
\begin{equation*}
h_{\delta }\left( \xi \right) =\sup_{\left\vert \eta \right\vert \leq \delta
}h\left( \xi +\eta \right) ,
\end{equation*}
is a function in $\mathcal{H}$ satisfying
\begin{equation}
\exists C>0,\exists N>0:h\left( \xi \right) \leq h_{\delta }\left( \xi
\right) \leq h\left( \xi \right) \left( 1+C\delta \right) ^{N},\ \forall \xi
\in \mathbb{R}^{n}, \label{3.15}
\end{equation}
\begin{equation}
\left( h^{j}\right) _{\delta }=\left( h_{\delta }\right) ^{j}\text{ for\ }
j=1,2,...  \label{3.16}
\end{equation}
\end{lemma}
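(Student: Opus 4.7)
The proof is largely a matter of unpacking the definition of $h_\delta$ and combining it with the temperate-weight inequality $\left(\ref{3.13}\right)$, plus the positivity of $h$. I would organize the plan in the same order as the three assertions of the lemma.

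First I would establish the two inequalities in $\left(\ref{3.15}\right)$. The lower bound $h(\xi)\leq h_\delta(\xi)$ is immediate by taking $\eta=0$ inside the supremum defining $h_\delta(\xi)$. For the upper bound, for any $|\eta|\leq\delta$ the hypothesis $h\in\mathcal H$ gives $h(\xi+\eta)\leq(1+C|\eta|)^N h(\xi)\leq(1+C\delta)^N h(\xi)$; taking the sup over $|\eta|\leq\delta$ produces the stated bound with the same constants $C,N$ that come from $h$.

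Next I would verify that $h_\delta$ itself lies in $\mathcal H$. Fix $\xi,\eta\in\mathbb R^n$ and rewrite
\[
h_\delta(\xi+\eta)=\sup_{|\zeta|\leq\delta}h(\xi+\eta+\zeta).
\]
Applying $\left(\ref{3.13}\right)$ to $h$ at the point $\xi+\zeta$ with the shift $\eta$ gives $h(\xi+\eta+\zeta)\leq(1+C|\eta|)^N h(\xi+\zeta)$; since the factor $(1+C|\eta|)^N$ does not depend on $\zeta$, I can pull it out of the supremum over $|\zeta|\leq\delta$ and obtain $h_\delta(\xi+\eta)\leq(1+C|\eta|)^N h_\delta(\xi)$, which is precisely $\left(\ref{3.13}\right)$ for $h_\delta$ with the same $C,N$.

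Finally, for $\left(\ref{3.16}\right)$ I would use that $h>0$ and that $t\mapsto t^j$ is strictly increasing on $(0,\infty)$, so the supremum of $h(\xi+\eta)^j$ over $|\eta|\leq\delta$ is attained at the same $\eta$'s as the supremum of $h(\xi+\eta)$; thus
\[
(h^j)_\delta(\xi)=\sup_{|\eta|\leq\delta}h(\xi+\eta)^j=\Bigl(\sup_{|\eta|\leq\delta}h(\xi+\eta)\Bigr)^j=\bigl(h_\delta(\xi)\bigr)^j.
\]
There is no real obstacle here: the entire argument is manipulation of suprema. The only point worth stating carefully is that in the $\mathcal H$-verification the constants $C$ and $N$ for $h_\delta$ can be taken equal to those of $h$, because the shift variable $\eta$ in the temperate estimate is independent of the sup variable $\zeta$.
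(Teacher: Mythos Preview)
Your argument is correct and essentially matches the paper's own proof: the paper verifies $h_\delta\in\mathcal H$ and the bounds in \eqref{3.15} by the same direct use of the temperate-weight inequality (in a slightly different order), and for \eqref{3.16} the paper says ``by recurrence on $j$,'' which amounts to the same monotonicity-of-$t\mapsto t^j$ observation you give.
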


\begin{proof}
First for all $\xi ,\theta \in \mathbb{R}^{n}$ we have
\begin{eqnarray*}
h_{\delta }\left( \xi +\theta \right) &=&\sup_{\left\vert \eta \right\vert
\leq \delta }h\left( \xi +\theta +\eta \right) \leq \sup_{\left\vert \eta
\right\vert \leq \delta }h\left( \xi +\eta \right) \left( 1+C\left\vert
\theta \right\vert \right) ^{N} \\
&\leq &\left( 1+C\left\vert \theta \right\vert \right) ^{N}h_{\delta }\left(
\xi \right) ,
\end{eqnarray*}
so $h_{\delta }\in \mathcal{H}$. Next the first inequality in $\left( \ref
{3.15}\right) $ follows from the definition of $h_{\delta }.$ For the second
inequality, since $h$ is a temperate weight function then there
exist $C>0$ and $N>0$ such that
\begin{equation*}
h_{\delta }\left( \xi \right) \leq \sup_{\left\vert \eta \right\vert \leq
\delta }\left( 1+C\left\vert \eta \right\vert \right) ^{N}h\left( \xi
\right) \leq h\left( \xi \right) \left( 1+C\delta \right) ^{N},\text{ }
\forall \xi \in \mathbb{R}^{n}.
\end{equation*}
Finally the property $\left( \ref{3.16}\right) $ is obtained by recurrence
on $j$.
\end{proof}

For every $h\in \mathcal{H}$ we set 
\begin{equation*}
\mathcal{B}_{p,h}=\left\{ u\in \mathcal{S}^{\prime },\left( \int
\left\vert h\left( \xi \right) \hat{u}\left( \xi \right) \right\vert
^{p}d\xi \right) ^{\frac{1}{p}}<\infty ,\text{ }1\leq p<\infty \right\} ,
\end{equation*}
provided with the norm
\begin{equation*}
\left\Vert u\right\Vert _{p,h}=\left( \int \left\vert h\left( \xi \right) 
\hat{u}\left( \xi \right) \right\vert ^{p}d\xi \right) ^{\frac{1}{p}}.
\end{equation*}
If $p=\infty ,$ then $\left\Vert u\right\Vert _{\infty ,h}=ess\sup
\left\vert h\left( \xi \right) \hat{u}\left( \xi \right) \right\vert .$

The following theorem is the basic result of hypoellipticity of differential
operators of constant strength, see H\"{o}rmander \cite[Theorem 7.3.1 and
Theorem 7.4.1]{H2}.

\begin{theorem}
\label{Th. Horm}Let $P\left( x,D\right) $ be a differential operator with $
C^{\infty }$ coefficients and of constant strength in a neighborhood of $x_{0}\in 
\mathbb{R}^{n}.$ Then there exists a sufficiently small open neighborhood $\Omega $ of 
$x_{0},$ and a linear mapping $E$ of $\mathcal{E}^{^{\prime }}\left( \mathbb{R}
^{n}\right) $ into $\mathcal{E}^{^{\prime }}\left( \mathbb{R}
^{n}\right) $ with the following properties:
\begin{equation*}
P\left( x,D\right) Eu=u\text{ in }\Omega \text{ if }u\in \mathcal{E}
^{^{\prime }}\left( \mathbb{R}^{n}\right) ,
\end{equation*}
\begin{equation}
EP\left( x,D\right) \upsilon =\upsilon \text{ in }\Omega \text{ if\ }
\upsilon \in \mathcal{E}^{^{\prime }}\left( \Omega \right) ,  \label{3.17}
\end{equation}
\begin{equation}
\left\Vert Eu\right\Vert _{p,\widetilde{P}_{0}h}\leq C_{h}\left\Vert
u\right\Vert _{p,h}\text{ if\ }u\in \mathcal{E}^{^{\prime }}\left( \Omega
\right) \cap \mathcal{B}_{p,h},\text{ }h\in \mathcal{H},  \label{3.18}
\end{equation}
where $P_{0}\left( D\right) =P\left( x_{0},D\right) $ and $C_{h}$ is
independent of $u.$

If in addition $P_{0}\left( D\right) $ is hypoelliptic then $P\left(
x,D\right) $ is so also.
\end{theorem}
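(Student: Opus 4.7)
The plan is to construct $E$ as a perturbation of a parametrix $E_{0}$ for the frozen constant-coefficient operator $P_{0}(D)=P(x_{0},D)$, and to invert the resulting error term by a Neumann series that converges once the neighborhood $\Omega$ of $x_{0}$ is taken small enough. The constant-strength hypothesis is exactly what will force the perturbation to be small on such a neighborhood.

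First I would invoke the classical weighted parametrix estimate for $P_{0}(D)$: on any fixed bounded set $K$ there is a linear map $E_{0}\colon\mathcal{E}'(K)\cap\mathcal{B}_{p,h}\to\mathcal{B}_{p,\widetilde{P}_{0}h}$ satisfying $P_{0}(D)E_{0}=I$ together with $\|E_{0}u\|_{p,\widetilde{P}_{0}h}\leq C_{h}\|u\|_{p,h}$ for every $h\in\mathcal{H}$. I would then decompose $P(x,D)=P_{0}(D)+Q(x,D)$, with $Q(x,D)=\sum_{|\alpha|\leq m}(a_{\alpha}(x)-a_{\alpha}(x_{0}))D^{\alpha}$, so that $P(x,D)E_{0}=I+R_{\Omega}$ where $R_{\Omega}=Q(x,D)E_{0}$. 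The whole problem then reduces to showing that the operator norm of $R_{\Omega}$ on $\mathcal{B}_{p,h}$ can be made strictly less than $1$ by shrinking $\Omega$.

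The smallness of $R_{\Omega}$ is where the constant-strength assumption enters. The bound $\widetilde{P}(x,\xi)\leq C\widetilde{P}_{0}(\xi)$ on a neighborhood of $x_{0}$, combined with the elementary inequality $|\xi^{\alpha}|\leq C\widetilde{P}_{0}(\xi)$ for $|\alpha|\leq m$, shows that each composition $D^{\alpha}E_{0}$ is bounded on $\mathcal{B}_{p,h}$: the $\widetilde{P}_{0}$-gain produced by $E_{0}$ is precisely absorbed by the derivative loss. On the other hand, the coefficient factors $a_{\alpha}(x)-a_{\alpha}(x_{0})$ can be made arbitrarily small in sup-norm by shrinking $\Omega$. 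Together these give $\|R_{\Omega}\|<1$, so $I+R_{\Omega}$ is invertible by Neumann series, and I would set $E=E_{0}(I+R_{\Omega})^{-1}$. The right-inverse identity $P(x,D)Eu=u$ in $\Omega$ and the weighted bound \eqref{3.18} then follow by composition. For the left-inverse identity \eqref{3.17} I would either run the same construction on the formal adjoint $P^{*}(x,D)$ (which still has constant strength) and transpose, or deduce uniqueness from an a priori estimate on $\mathcal{E}'(\Omega)$.

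The hypoellipticity assertion I would then extract by iterating \eqref{3.18}: if $P_{0}(D)$ is hypoelliptic, Proposition \ref{prop1.1} ensures that $\widetilde{P}_{0}(\xi)^{-1}$ decays as $|\xi|\to\infty$, so every application of $E$ to a smooth right-hand side of $P(x,D)v=f$ gains one factor of $\widetilde{P}_{0}$ in the weight, and a standard bootstrap recovers arbitrary Sobolev regularity of $v$. The principal obstacle in this scheme is securing the sharp weighted parametrix $E_{0}$ with the gain $h\mapsto\widetilde{P}_{0}h$; this is the deep constant-coefficient ingredient, resting on the $L^{p}$-multiplier theory of polynomial symbols, and it is what drives the whole construction. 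Once it is granted, the remainder is a soft perturbative argument, but the bookkeeping uniformly across the scale $\{\mathcal{B}_{p,h}\}_{h\in\mathcal{H}}$ — in particular, controlling how the Neumann-series constant depends on $h$ — will still require care.
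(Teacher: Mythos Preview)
The paper does not prove this theorem: it is quoted from H\"ormander \cite[Theorems 7.3.1 and 7.4.1]{H2} and used as a black box, so there is no in-paper argument to compare against. Your outline is in fact the architecture of H\"ormander's own proof --- freeze coefficients at $x_{0}$, take a Fourier-multiplier parametrix $E_{0}$ for $P_{0}(D)$, and invert the remainder by a Neumann series on a small neighborhood.

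Two steps in your sketch are too quick, however. First, the ``elementary inequality'' $|\xi^{\alpha}|\leq C\widetilde{P}_{0}(\xi)$ for all $|\alpha|\leq m$ is false in general (take $P_{0}=\partial_{1}$, $m=1$, $\alpha=e_{2}$); constant strength does not make $P_{0}$ elliptic. What H\"ormander actually uses is that the linear space of polynomials weaker than $P_{0}$ is finite-dimensional and contains every $P(x,\cdot)$, so one can write $P(x,D)-P_{0}(D)=\sum_{j}c_{j}(x)P_{j}(D)$ with $\{P_{j}\}$ a basis of that space and $c_{j}(x_{0})=0$; it is $P_{j}(D)E_{0}$, not $D^{\alpha}E_{0}$, that is bounded on $\mathcal{B}_{p,h}$. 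Second, sup-norm smallness of the $c_{j}$ near $x_{0}$ does \emph{not} by itself make multiplication by $c_{j}$ small as an operator on $\mathcal{B}_{p,h}$: these spaces live on the Fourier side, where multiplication becomes convolution, and the relevant bound is $\int(1+C|\eta|)^{N}|\widehat{\chi c_{j}}(\eta)|\,d\eta$ for a cutoff $\chi$; shrinking the support of $\chi$ spreads $\widehat{\chi}$, so a genuine balancing argument is required. This is exactly the delicate point of H\"ormander's proof, and it is also what produces the uniform-in-$h$ refinement \eqref{4.1} with $h_{\delta}$ that the present paper actually relies on downstream.
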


\begin{remark}
In the proof of this result, H\"{o}rmander has shown the following estimate
which is stronger than $\left( \ref{3.18}\right) ,$ see \cite[p. 176]{H2}.
\begin{equation}
\left\Vert Eu\right\Vert _{p,\widetilde{P}_{0}h_{\delta }}\leq C\left\Vert
u\right\Vert _{p,h_{\delta }}\text{ if\ }u\in \mathcal{E}^{^{\prime }}\left(
\Omega \right) \cap \mathcal{B}_{p,h},\text{ }h\in \mathcal{H},  \label{4.1}
\end{equation}
if $\delta $ is sufficiently small. The constant $C$\ is independent of $u,h$
and $\delta .$
\end{remark}

We will use this estimate in the proof of our second result of this paper
which is the following theorem.

\begin{theorem}
\label{Th2}Let $P\left( x,D\right) $ be a differential operator of constant
strength in a neighborhood of $x_{0},$ and let $\left( M_{p}\right) $ a
sequence satisfying the conditions $\left( \ref{H1}\right) -\left( \ref{H3}
\right) .$ Then there exists a open neighborhood $\Omega _{0}$ of $
x_{0}$\ such that, if the coefficients of $P\left( x,D\right) $ are in $
R_{M}\left( \Omega _{0}\right) $ and the operator $P_{0}\left( D\right)
=P\left( x_{0},D\right) $ is $d-$hypoelliptic and the sequence $\left(
M_{p}\right) $ satisfyies the condition $\left( \ref{3.10}\right) ,$ we have
\begin{equation*}
\forall u\in \mathcal{D}^{\prime }\left( \Omega _{0}\right) ,\ P\left(
x,D\right) u\in R_{M}\left( \Omega _{0}\right) \Rightarrow u\in
R_{M^{d}}\left( \Omega _{0}\right) .
\end{equation*}
\end{theorem}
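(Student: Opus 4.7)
The strategy is to reduce to the constant coefficient Theorem~\ref{Th.1}. Since $P_0=P(x_0,D)$ is hypoelliptic and $P(x,D)$ is of constant strength, the last assertion of Theorem~\ref{Th. Horm} guarantees that $P(x,D)$ is itself hypoelliptic in a sufficiently small neighborhood $\Omega_0$ of $x_0$, so $Pu\in R_M(\Omega_0)\subset C^\infty$ forces $u\in C^\infty(\Omega_0)$. The plan is then to show that $u$ is a Roumieu vector of the constant coefficient operator $P_0$, i.e.\ $u\in R_M(\Omega_0,P_0)$: once this is established, Theorem~\ref{Th.1} applied to the $d$-hypoelliptic operator $P_0$, together with the hypothesis (\ref{3.10}), immediately yields $u\in R_{M^d}(\Omega_0)$.

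Thus the heart of the argument is to show that for every compact $H\subset\Omega_0$ there is a constant $C>0$ with $\|P_0^l u\|_{L^2(H)}\leq C^{l+1}M_{lm}$ for all $l\in\mathbb{N}$. My approach is to exploit the parametrix $E$ from Theorem~\ref{Th. Horm} together with the refined estimate~(\ref{4.1}). Splitting $P(x,D)=P_0(D)+R(x,D)$, the coefficients of $R$ lie in $R_M(\Omega_0)$ and vanish at $x_0$. Choosing a nested family of cutoffs $\chi_0,\chi_1,\dots$ of class $R_M$ equal to $1$ on a shrinking system of neighborhoods of $H$, the identity $EP(\chi_j u)=\chi_j u$ gives $\chi_j u = E(\chi_j Pu)+E([P,\chi_j]u)$, and hence, using $P_0=P-R$,
\begin{equation*}
P_0(\chi_j u)=\chi_j Pu+[P,\chi_j]u-R(\chi_j u).
\end{equation*}
Iterating this identity $l$ times with nested cutoffs, $P_0^l u$ is expressed on $H$ as a finite sum of terms of the form $E\circ D^{\beta}\bigl(a\cdot P^{k}u\bigr)$, where $a$ is a product of coefficients of $R$ and derivatives of the $\chi_j$, and $P^k u\in R_M$ by hypothesis. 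Each application of $E$ gains a factor $\widetilde{P_0}$ in the Bessel-potential weight by~(\ref{4.1}); iterating with $h=(\widetilde{P_0})^{j-1}$ and using the identity $(h^j)_\delta=(h_\delta)^j$ from~(\ref{3.16}) to carry the $\delta$-regularization through powers, and then invoking the pointwise bound $|P_0(\xi)^l|\leq \widetilde{P_0}(\xi)^l$ on the Fourier side, converts these gains into $L^2$-control of $P_0^l u$.

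The main obstacle, in my view, is the combinatorial bookkeeping of this iteration. One must simultaneously control: the number of terms produced by expanding $P_0=P-R$ at each stage; the Roumieu norms of the variable coefficients of $R$, whose products merge via $M_p M_q\leq H^{p+q}M_{p+q}$ from~(\ref{H3}); the $M_{|\alpha|}$-type bounds for derivatives of the nested cutoffs $\chi_j$ (which must be chosen of class $R_M$ with uniform control); and the factor $(1+C\delta)^N$ in~(\ref{3.15}) that arises whenever one passes between $\widetilde{P_0}$ and its regularization $(\widetilde{P_0})_\delta$. This last factor forces $\delta$ to shrink proportionally to $1/l$, so the shells between consecutive $\chi_j$ must shrink accordingly. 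The task is then to verify that all these contributions combine into a single geometric factor $C^{l+1}$ multiplied by $M_{lm}$, at which point the appeal to Theorem~\ref{Th.1} completes the proof.
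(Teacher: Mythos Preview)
Your global strategy coincides with the paper's: use H\"ormander's hypoellipticity to get $u\in C^\infty$, then show $u\in R_M(\Omega_0,P_0)$ and invoke Theorem~\ref{Th.1}. Where you diverge is in the central step, and your route is both more complicated than necessary and not clearly sound as written.

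The paper obtains the Roumieu-vector bound for $P_0$ in one clean stroke, without splitting $P=P_0+R$, without nested cutoffs, and without commutators. The key identity is $E^lP^l(x,D)v=v$ for $v\in C_0^\infty(\Omega)$, which follows from (\ref{3.17}) by iteration. Independently, iterating (\ref{4.1}) with $h=\widetilde{P_0}^{\,l-1}$ and using $(h^{l-1})_\delta=h_\delta(h^{l-2})_\delta\le C_1\,\widetilde{P_0}\,(h^{l-2})_\delta$ gives $\|E^l w\|_{2,\widetilde{P_0}^{\,l}}\le A^l\|w\|_{L^2}$ with a \emph{fixed} $\delta$ and a constant $A$ independent of $l$. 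Applying this to $w=P^l v$ and using $|P_0^l(\xi)|\le\widetilde{P_0}(\xi)^l$ yields
\[
\|P_0^l v\|_{L^2}\le A^l\|P^l(x,D)v\|_{L^2},\qquad v\in C_0^\infty(\Omega),
\]
which is exactly the bridge (\ref{4.4}) from Roumieu vectors of $P(x,D)$ to Roumieu vectors of $P_0(D)$. From there one checks that $Pu\in R_M$ with $R_M$ coefficients makes $u$ a Roumieu vector of $P$, and (\ref{4.4}) transfers this to $P_0$.

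By contrast, your scheme has some loose ends. The displayed identity $P_0(\chi_j u)=\chi_j Pu+[P,\chi_j]u-R(\chi_j u)$ is pure algebra and does not involve $E$, so your claim that iterating it produces terms of the form $E\circ D^\beta(a\cdot P^k u)$ is not justified; the parametrix and the $P_0=P-R$ decomposition are being mixed without a clear mechanism. More importantly, your assertion that the factor $(1+C\delta)^N$ in (\ref{3.15}) ``forces $\delta$ to shrink proportionally to $1/l$'' is mistaken: in the correct iteration $\delta$ stays fixed and $(1+C\delta)^N$ simply enters as a multiplicative constant $C_1$ that gets raised to the $l$th power. Shrinking $\delta$ with $l$ would in fact destroy the uniformity you need, since (\ref{4.1}) only holds for $\delta$ below a fixed threshold and the constant $C$ there is uniform precisely because $\delta$ is not moving. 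Once you see the direct estimate above, the nested cutoffs, the perturbation $R$, and the combinatorial bookkeeping all become unnecessary.
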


\begin{proof}
Let $\Omega $ be as in Theorem \ref{Th. Horm} and let us show that $
\left( \ref{4.1}\right) $ gives the following estimate
\begin{equation}\label{4.2}
\exists A>0,\ \left\Vert E^{l}u\right\Vert _{2,\left( \tilde{P}_{0}\right)
^{l}}\leq A^{l}\left\Vert u\right\Vert _{L^{2}},\forall u\in C_{0}^{\infty
}\left( \Omega \right) ,\ l=0,1,...,
\end{equation}
Indeed applying for $p=2,$ $h^{l-1}$\
instead of $h$\ and $E^{l-1}u$\ instead of $u$ we obtain 
\begin{equation}
\left\Vert E^{l}u\right\Vert _{2,\widetilde{P}_{0}(h^{l-1})_{\delta
}}=\left\Vert EE^{l-1}u\right\Vert _{2,\widetilde{P}_{0}(h^{l-1})_{\delta
}}\leq C\left\Vert E^{l-1}u\right\Vert _{2,(h^{l-1})_{\delta }},  \label{4.3}
\end{equation}
but from the properties $\left( \ref{3.15}\right) $ and $\left( \ref{3.16}
\right) $ of $h_{\delta }$\ we have 
\begin{eqnarray*}
(h^{l-1})_{\delta }\left( \xi \right)  &=&\left( h_{\delta }\left( \xi
\right) \right) ^{l-1}=h_{\delta }\left( \xi \right) \left( h_{\delta
}\left( \xi \right) \right) ^{l-2}=h_{\delta }\left( \xi \right) \left(
h^{l-2}\right) _{\delta }\left( \xi \right)  \\
&\leq &h\left( \xi \right) \left( 1+C\delta \right) ^{N}(h^{l-2})_{\delta
}\left( \xi \right)  \\
&\leq &C_{1}h\left( \xi \right) (h^{l-2})_{\delta }\left( \xi \right) 
\end{eqnarray*}
Substituting in $\left( \ref{4.3}\right) $ and replacing $h$ with $
\widetilde{P}_{0}$ we get
\begin{equation*}
\left\Vert E^{l}u\right\Vert _{2,\widetilde{P}_{0}(\widetilde{P}
_{0}^{l-1})_{\delta }}\leq CC_{1}\left\Vert E^{l-1}u\right\Vert _{2,
\widetilde{P}_{0}(\widetilde{P}_{0}^{l-2})_{\delta }}
\end{equation*}
Repeating the same argument we get at the $l-th$ step
\begin{equation*}
\left\Vert E^{l}u\right\Vert _{2,\widetilde{P}_{0}(\widetilde{P}
_{0}^{l-1})_{\delta }}\leq \left( CC_{1}\right) ^{l}\left\Vert u\right\Vert
_{L^{2}},
\end{equation*}
which implies from the property $\left( \ref{3.15}\right) $ 
\begin{equation*}
\left\Vert E^{l}u\right\Vert _{2,\tilde{P}_{0}^{l}}\leq \left\Vert
E^{l}u\right\Vert _{2,\widetilde{P}_{0}(\widetilde{P}_{0}^{l-1})_{\delta
}}\leq \left( CC_{1}\right) ^{l}\left\Vert u\right\Vert _{L^{2}}
\end{equation*}
Thus the estimate $\left( \ref{4.2}\right) $ is fulfilled with $A=CC_{1}.$

Applying $\left( \ref{4.2}\right) $ for $P^{l}\left( x,D\right) u$ instead
of $u$ we obtain 
\begin{equation*}
\left\Vert E^{l}P^{l}\left( x,D\right) u\right\Vert _{2,\tilde{P}
_{0}^{l}}\leq A^{l}\left\Vert P^{l}\left( x,D\right) u\right\Vert _{L^{2}},\
\forall u\in C_{0}^{\infty }\left( \Omega \right) ,\ l=0,1,...,
\end{equation*}
On the other hand 
\begin{equation*}
\left\Vert P_{0}^{l}\left( D\right) u\right\Vert _{L^{2}}\leq \left\Vert
u\right\Vert _{2,\tilde{P}_{0}^{l}},\text{ }\forall u\in C_{0}^{\infty
}\left( \Omega \right) ,
\end{equation*}
then for all $u\in C_{0}^{\infty }\left( \Omega \right) $ and $l=0,1,...,$
we have 
\begin{equation*}
\left\Vert P_{0}^{l}\left( D\right) E^{l}P^{l}\left( x,D\right) u\right\Vert
_{L^{2}}\leq \left\Vert E^{l}P^{l}\left( x,D\right) u\right\Vert _{2,\tilde{P
}_{0}^{l}}\leq A^{l}\left\Vert P^{l}\left( x,D\right) u\right\Vert .
\end{equation*}
But from Theorem \ref{Th. Horm} $EP\left( x,D\right) u=u,\ \forall u\in
C_{0}^{\infty }\left( \Omega \right) ,$ so $E^{l}P^{l}\left( x,D\right) u=u$
for all $l=0,1,...,$ and therefore from the above estimate we get 
\begin{equation}
\left\Vert P_{0}^{l}\left( D\right) u\right\Vert _{L^{2}}\leq
A^{l}\left\Vert P^{l}\left( x,D\right) u\right\Vert  \label{4.4}
\end{equation}

Now let $u\in \mathcal{E}^{^{\prime }}\left( \Omega \right) $ such that $
P\left( x,D\right) u\in R_{M}\left( \Omega \right) .$ By hypothesis the
operator $P\left( x,D\right) $ is of constant strength and the operator $
P_{0}\left( D\right) $ is hypoelliptic, then $P\left( x,D\right) $ is also
hypoelliptic, therefore $u\in C^{\infty }\left( \Omega \right) $ because $
R_{M}\left( \Omega \right) \subset C^{\infty }\left( \Omega \right) .$
Further since the coefficients of $P\left( x,D\right) $ are in $R_{M}\left(
\Omega \right) $ it is easy to check that $u$ is a Roumieu vector of the
operator $P\left( x,D\right) $ in $\Omega ,$ hence $u$ is also a Roumieu
vector of the operator $P_{0}\left( D\right) $ in $\Omega $ according to $
\left( \ref{4.4}\right) .$ From Theorem \ref{Th.1} we conclude that $u\in R_{M^{d}}\left(
\Omega \right) .$
\end{proof}

\begin{corollary}
Let $\Omega $ be an open subset of $\mathbb{R}^{n}$, $P\left(
x,D\right) $ a differential operator of constant strength in $\Omega $
with coefficients in $R_{M}\left( \Omega \right) ,$ and let $\left(
M_{p}\right) $ be a sequence satisfying the conditions $\left( \ref{H1}\right)
-\left( \ref{H3}\right) $. If for some $x_{0}\in \Omega $\ the operator $
P_{0}\left( D\right) =P\left( x_{0},D\right) $ is $d-$hypoelliptic and the
sequence $\left( M_{p}\right) $ satisfyies the condition $\left( \ref{3.10}
\right) ,$ then 
\begin{equation*}
\forall u\in \mathcal{D}^{\prime }\left( \Omega \right) ,\ P\left(
x,D\right) u\in R_{M}\left( \Omega \right) \Rightarrow u\in R_{M^{d}}\left(
\Omega \right) .
\end{equation*}
\end{corollary}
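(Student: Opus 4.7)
The plan is to reduce the global statement to the local Theorem~\ref{Th2} applied at each point $y\in\Omega$, using the constant-strength hypothesis to transport the $d$-hypoellipticity from the distinguished point $x_0$ to all of $\Omega$.

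First I would check that for every $y\in\Omega$ the frozen operator $P_y(D):=P(y,D)$ is $d$-hypoelliptic. The constant-strength assumption provides $\widetilde P(x_0,\xi)\le C\,\widetilde P(y,\xi)$ and $\widetilde P(y,\xi)\le C'\,\widetilde P(x_0,\xi)$ for all $\xi\in\mathbb R^n$. Since $d$-hypoellipticity (Proposition~\ref{prop1.1}) can be recast in terms of the ratios $|\xi|^{|\beta|/d}|P^{(\beta)}(\xi)|/\widetilde P(\xi)$ -- the function $\widetilde P(\xi)$ being comparable to $1+|P(\xi)|$ whenever $P(D)$ is hypoelliptic -- and since both $|P_y^{(\beta)}(\xi)|\le\widetilde P(y,\xi)$ and the bound $1+|P_y(\xi)|\gtrsim\widetilde P(y,\xi)$ survive under the two-sided equivalence with $\widetilde P(x_0,\cdot)$, the inequality \eqref{1.1} holds for $P_y(D)$ with the same exponent $d$. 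This is the classical invariance of the hypoelliptic index under equal strength, cf.\ \cite{H2}.

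Next, for each $y\in\Omega$, I would apply Theorem~\ref{Th2} with $y$ playing the role of $x_0$. All its hypotheses are fulfilled: $P(x,D)$ is of constant strength on a neighborhood of $y$ (being so on all of $\Omega$), its coefficients belong to $R_M$ of that neighborhood (by restriction from $R_M(\Omega)$), the frozen operator $P_y(D)$ is $d$-hypoelliptic by the previous step, and the sequence $(M_p)$ still satisfies \eqref{H1}--\eqref{H3} together with \eqref{3.10}. The theorem produces an open neighborhood $\Omega_y\subseteq\Omega$ of $y$ such that every $v\in\mathcal D'(\Omega_y)$ with $P(x,D)v\in R_M(\Omega_y)$ lies in $R_{M^d}(\Omega_y)$. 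Applied to $u$, whose restriction to $\Omega_y$ inherits these properties from $\Omega$, this yields $u|_{\Omega_y}\in R_{M^d}(\Omega_y)$ for every $y\in\Omega$.

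Finally I would glue these local conclusions. Since the defining estimate of $R_{M^d}$ is imposed on each compact subset, any compact $H\subset\Omega$ is covered by finitely many $\Omega_{y_1},\dots,\Omega_{y_N}$; on each $\Omega_{y_j}\cap H$ we get a bound $\|D^\alpha u\|_{L^2}\le C_j^{|\alpha|+1}M_{|\alpha|}^d$, and taking $C=\max_j C_j$ gives the required uniform estimate on $H$. Hence $u\in R_{M^d}(\Omega)$. The only non-routine step is the first one, namely confirming that equal strength preserves the hypoellipticity exponent $d$; once that is established, the covering argument is formal.
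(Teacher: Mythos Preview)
Your proposal is correct and follows essentially the same approach as the paper: transport the $d$-hypoellipticity from $x_0$ to every point of $\Omega$ via constant strength (the paper simply cites \cite{T} for this classical fact, while you sketch it and cite \cite{H2}), then apply Theorem~\ref{Th2} locally and cover each compact by finitely many of the resulting neighborhoods. Your gluing step is spelled out a bit more explicitly than the paper's, but the argument is the same.
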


\begin{proof}
First note that since the differential operator $P\left( x,D\right) $ is of
constant strength in $\Omega $ and $P\left( x_{0},D\right) $ is $d-$%
hypoelliptic so $P\left( x_{1},D\right) $ is also $d-$hypoelliptic for all
point $x_{1}\in \Omega ,$ see \cite{T}. Next each compact set $H$ of $\Omega 
$ can be recovered by a finite number of open subset $\Omega _{j}$ as in
Theorem \ref{Th2}.
\end{proof}

\section*{author's contribution}
All authors contributed to the study conception and design.
\section*{conflict of interest}
There is no conflict of interest.

\end{document}